\font\smallit=cmti10
\font\smalltt=cmtt10
\renewcommand\section{\@startsection {section}{1}{\z@}
{-30pt \@plus -1ex \@minus -.2ex}
{2.3ex \@plus.2ex}
{\normalfont\normalsize\bfseries\boldmath}}
\renewcommand\subsection{\@startsection{subsection}{2}{\z@}
{-3.25ex\@plus -1ex \@minus -.2ex}
{1.5ex \@plus .2ex}
{\normalfont\normalsize\bfseries\boldmath}}
\renewcommand{\@seccntformat}[1]{\csname the#1\endcsname. }
\newtheorem{theorem}{Theorem}
\newtheorem{lemma}{Lemma}
\newtheorem{conjecture}{Conjecture}
\theoremstyle{definition}
\newtheorem{definition}{Definition}
\begin{document}

\begin{center}
\uppercase{\bf An Amalgamation Nim with Restriction}
\vskip 20pt

{\bf Hikaru Manabe}\\
{\smallit Keimei Gakuin Junior and High School, Kobe City, Japan}\\
{\tt urakihebanam@gmail.com}


\end{center}
\vskip 20pt
\centerline{\smallit Received: , Revised: , Accepted: , Published: } 
\vskip 30pt


\centerline{\bf Abstract}
\noindent
In an amalgamation Nim, players are allowed to use a move from the
traditional form of Nim or to amalgamate two piles when they are not empty. No formula that describes the set of P-positions of Amalgamation Nim is known. The author gives a condition on the amalgamation of two piles. Players can amalgamate
two piles only when the number of stones in these two piles is equal to or more than 2. Then, we get a formula that describes the set of P-positions for this variant of amalgamation Nim.
\pagestyle{myheadings} 
\markright{\smalltt    \hfill} 
\thispagestyle{empty} 
\baselineskip=12.875pt 
\vskip 30pt

\section{Introduction}
We denoted the sets of non-negative integers and natural numbers by $\mathbb{Z}_{\ge 0}$ and  $\mathbb{N}$, respectively. 
The winning strategy for the game of Nim was determined in 1902 by Bouton \cite{bouton}. Since then, there have been many variants of Nim. Nim and its variants belong to combinatorial games. For a general introduction to combinatorial games, see \cite{lesson}. 

Amalgamation Nim is a relatively new variant of Nim. One origin of Amalgamation Nim was a query \cite{richard} from a student of S.C. Locke, who is one of the authors of \cite{amalnim}. 

In this article, we Maximum Amalgamation Nim with restrictions, which is a variant of Amalgamation Nim.

In Definition \ref{defofalma}, we define Amalgamation Nim. 
\begin{definition}\label{defofalma}
The two players begin with several piles of stones, and then alternate moves, with the player who makes the last legal move being the winner. The moves are of two types: \\
$(i)$ A player chooses a pile, and removes any number of stones of the pile;\\
$(ii)$ A player can replace two non-empty piles by a single pile containing the total number of stones in the original two piles. 
\end{definition}

We deal with impartial games with no draws; therefore, there are only two outcome classes. 

\begin{definition}
$(a)$ A position is called a $\mathcal{P}$-\textit{position} if it is a winning position for the previous player (the player who just moved), as long as he/she plays correctly at every stage.\\
$(b)$ A position is called an $\mathcal{N}$-\textit{position} if it is a winning position for the next player, as long as he/she plays correctly at every stage. 
\end{definition}

 \begin{definition}\label{defofmoveg}
	For any position $\mathbf{p}$ in game $\mathbf{G}$, a set of positions can be reached by a single move in $\mathbf{G}$, which we denote as \textit{move}$(\mathbf{p})$. 
\end{definition}

One of the most important topics of impartial games is to determine the set of $\mathcal{P}$-position. 

We denote a position of the game by $(x,y,z)$, where $x,y,z$ are the number of stones in the first, the second, and the third pile. 

\begin{theorem}
For two-pile game of Amalgamation Nim in Definition \ref{defofalma}, the set of  $\mathcal{P}$-positions is $\{(x,y):x=y\}$. Therefore,  Amalgamation Nim is the same as traditional Nim for two-pile game.
\end{theorem}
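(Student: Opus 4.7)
The plan is to apply the standard recursive characterization of $\mathcal{P}$-positions: setting $\mathcal{P} = \{(x,y) : x = y\}$, it suffices to verify (a) the terminal position belongs to $\mathcal{P}$; (b) no move from a position in $\mathcal{P}$ leads to another position in $\mathcal{P}$; and (c) from every position not in $\mathcal{P}$ there is a legal move landing in $\mathcal{P}$. Granting these three properties, a straightforward induction on the total number of stones identifies $\mathcal{P}$ with the true set of $\mathcal{P}$-positions.

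First I would note that the only terminal position is $(0,0)$, since move (i) requires a non-empty pile and move (ii) requires two non-empty piles. As $(0,0) \in \mathcal{P}$, property (a) holds. Next, for (b), take $(x,x) \in \mathcal{P}$ with $x \ge 1$. A move of type (i) strictly decreases exactly one coordinate, producing $(x',x)$ or $(x,x')$ with $x' < x$, which is outside $\mathcal{P}$; a move of type (ii) yields $(2x,0)$, which is outside $\mathcal{P}$ because $2x \ge 2 \neq 0$. For (c), given $(x,y)$ with $x \neq y$, assume without loss of generality $x < y$ and remove $y-x$ stones from the second pile via a move of type (i) to reach $(x,x) \in \mathcal{P}$.

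The argument is essentially routine; the only point requiring a moment of attention is checking that the amalgamation move from $(x,x)$ does not accidentally return to $\mathcal{P}$, which is clear since amalgamating two piles always leaves one pile paired with an empty one. The concluding remark of the theorem, that two-pile Amalgamation Nim coincides with ordinary two-pile Nim, then follows because we have identified the same set of $\mathcal{P}$-positions as Bouton's classical analysis and because the amalgamation move is never useful here (every amalgamation from an $\mathcal{N}$-position leaves one in an $\mathcal{N}$-position, as the analysis above shows).
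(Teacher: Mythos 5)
The paper states this theorem without any proof at all, so there is nothing to compare against on the paper's side; your proposal supplies the missing argument, and it is the standard and correct one. The verification of the three properties is fine: $(0,0)$ is the unique terminal position and lies in $\{(x,y):x=y\}$; a type-(i) move from $(x,x)$ changes exactly one coordinate, and the amalgamation move sends $(x,x)$ with $x\ge 1$ to $(2x,0)\notin\mathcal{P}$; and from $(x,y)$ with $x<y$ one reaches $(x,x)$ by a type-(i) move. Your closing observation that amalgamation is never useful in the two-pile game is also correct and justifies the ``same as traditional Nim'' clause.

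One small technical point deserves attention: you justify the passage from properties (a)--(c) to the identification of $\mathcal{P}$ by ``induction on the total number of stones,'' but the amalgamation move preserves the total number of stones (it sends $(x,y)$ to $(x+y,0)$). So when you try to certify $(x,x)$ as a $\mathcal{P}$-position, one of its options, namely $(2x,0)$, has the same stone count and is not covered by that inductive hypothesis. The fix is routine: induct instead on the pair (total number of stones, number of non-empty piles) ordered lexicographically, or on the length of the longest play from the position; every legal move strictly decreases either measure, so the game is acyclic and the standard characterization of $\mathcal{P}$-positions applies. With that adjustment the proof is complete.
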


When the number of piles is larger than two, there is no formula that describes  $\mathcal{P}$-positions of the game in Definition
\ref{defofalma}. However, in \cite{amalnim}, there is a list of
P-positions for the three-heap Nim with one heap of height at most seven.

\section{ An Amalgamation Nim with Restriction}
We define an Amalgamation Nim with Restriction. This is a variant of Amalgamation Nim in which there is a restriction on the number of stones when we merge two piles.
\begin{definition}\label{defofamanimris}
Two players take turns to do one of the following $(i)$ and $(ii)$.\\
$(i)$ A player chooses a pile and removes any number of stones from the pile.\\
$(ii)$ A player can replace two piles with a single pile containing the total number of stones in the original two piles when the numbers of stones in two piles are equal to or more than 2. 
\end{definition}

\begin{definition}\label{moveofalmarest}
	For $x,y,z \in \mathbb{Z}_{\ge 0}$, we define \\
\begin{align}
& move(x,y,z)= \{(u,y,z):u \in \mathbb{Z}_{\geq0} \text{ and } u<x \}\label{movex} \\
& \  \ =\{(x,v,z):v \in \mathbb{Z}_{\geq0} \text{ and } v <y\}\label{movey} \\
& \  \ =\{(x,y,w):w \in \mathbb{Z}_{\geq0} \text{ and } w <z\} \label{movez} \\
& \ \  =\{(x+y,0,z),(0,x+y,z):\text{ if } x,y \geq 2\} \label{movemerge1} \\
& \ \  =\{(x,y+z,0),(x,0,y+z):\text{ if } y,z \geq 2\} \label{movemerge2} \\
& \ \  =\{(x+z,y,0),(0,y,x+z):\text{ if } x,z \geq 2\}. \label{movemerge3}
\end{align}
\end{definition}

For numbers $x,y,z,u,v,w \in \mathbb{Z}_{\ge 0}$,
let $x=\sum_{i=0}^{n}x_i$, $y=\sum_{i=0}^{n}y_i$, $z=\sum_{i=0}^{n}z_i$, $u=\sum_{i=0}^{n}u_i$, $v=\sum_{i=0}^{n}v_i$, $w=\sum_{i=0}^{n}w_i$, where $x_i, y_i, z_i, u_i, v_i, w_i \in \{0,1\}.$

\begin{definition}\label{defofpn}
We define sets as the followings. \\
$(i.1)$ $P_{0,1}=\{(x,y,z):x+y=z, x,y \leq z, x \oplus y \oplus z = 0
\text{ and } \min(x,y) < 2 \text{ for } x,y,z \in \mathbb{Z}_{\ge 0}\}$.\\
$(i.2)$ $P_{0,2}=\{(x,y,z):x+y>z+2,  x,y \leq z, \text{ and } x \oplus y \oplus z = 0 \text{ for } x,y,z \in \mathbb{Z}_{\ge 0}\}$. \\
$(ii.1)$ $N_{0,1}=\{(x,y,z):x+y=z, x,y \leq z, x \oplus y \oplus z = 0
\text{ and } x,y \geq 2 \text{ for } x,y,z \in \mathbb{Z}_{\ge 0}\}$.\\
$(ii.2)$ $N_{0,2}=\{(x,y,z):x+y=z+2,  x,y \leq z, \text{ and } x \oplus y \oplus z = 0 \text{ for } x,y,z \in \mathbb{Z}_{\ge 0}\}$.\\
$(iii)$ $P_{1,1}=\{(x,y,z+1):x+y \text{ is } even \text{ and } (x,y,z) \in N_{0,1}\}$
$ \cup \{(x,y,z-1):x+y \text{ is } odd \text{ and } (x,y,z) \in N_{0,1}\}$.\\
$(iv)$ $P_{1,2}=\{(x,y,z+1):x+y \text{ is } even \text{ and } (x,y,z) \in N_{0,2}\}$
$ \cup \{(x,y,z-1):x+y \text{ is } odd \text{ and } (x,y,z) \in N_{0,2}\}$.\\
$(v)$ $P_0=\{(x,y,z):(x,y,z) \in P_{0,1} \cup P_{0,2}\}$
$\cup \{(x,y,z):(x,z,y) \in P_{0,1} \cup P_{0,2}\}$
$\cup \{(x,y,z):(y,z,x) \in P_{0,1} \cup P_{0,2}\}$.\\
$(vi)$ $P_1=\{(x,y,z):(x,y,z) \in P_{1,1} \cup P_{1,2}\}$
$\cup \{(x,y,z):(x,z,y) \in P_{1,1} \cup P_{1,2}\}$
$\cup \{(x,y,z):(y,z,x) \in P_{1,1} \cup P_{1,2}\}$.\\
$(vii)$ $N_0=\{(x,y,z):(x,y,z) \in N_{0,1} \cup N_{0,2}\}$
$\cup \{(x,y,z):(x,z,y) \in N_{0,1} \cup N_{0,2}\}$
$\cup \{(x,y,z):(y,z,x) \in N_{0,1} \cup N_{0,2}\}$.\\
$(viii)$ $P=P_0 \cup P_1 $.
\end{definition}

\begin{lemma}\label{lemmaforxyz}
Suppose that 
\begin{equation}
x \oplus y \oplus z = 0. \label{xoplusyoplusz0} 
\end{equation}
Then, we have the followings.\\
$(i)$  $x+y=z$ if and only if $x_i+y_i=z_i$ for $i=0, \dots, n$.\\
$(ii)$  $x+y=z+2$ if and only if $x_i+y_i=z_i$ for $i=1, \dots, n$, $x_0=y_0=1$ and $z_0=0$.\\
$(iii)$ $x+y>z+2$ if and only if  $x_j=y_j=1$ and $z_j=0$ for some $j\geq 1$.\\
\end{lemma}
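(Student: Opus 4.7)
My plan is to analyze the bitwise structure forced by $x\oplus y\oplus z=0$ and then read off all three claims from a single digit-by-digit computation of $x+y-z$.

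First, I would note that \eqref{xoplusyoplusz0} requires, at every bit position $i$, the triple $(x_i,y_i,z_i)$ to contain an even number of $1$s. Thus exactly one of four patterns holds at each $i$: (0) $x_i=y_i=z_i=0$; (A) $x_i=y_i=1$ with $z_i=0$; (B) $x_i=z_i=1$ with $y_i=0$; (C) $y_i=z_i=1$ with $x_i=0$. The key per-bit computation is that the contribution to $x+y-z=\sum_{i=0}^{n}2^i(x_i+y_i-z_i)$ vanishes in patterns (0), (B), (C) and equals $2^{i+1}$ in pattern (A). So, writing $\mathcal{A}$ for the set of indices where pattern (A) occurs, the whole argument will rest on the identity
\[
x+y-z\;=\;\sum_{i\in\mathcal{A}}2^{i+1}.
\]

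From this identity I expect the three parts to follow in parallel. Part (i) is immediate: $x+y=z$ iff $\mathcal{A}=\emptyset$, which by the four-pattern classification is equivalent to $x_i+y_i=z_i$ at every $i$, since (0), (B), (C) all satisfy that digit-sum equation while (A) does not. For (ii), the equation $\sum_{i\in\mathcal{A}}2^{i+1}=2$, together with the fact that each summand is at least $2$ with equality only at $i=0$, forces $\mathcal{A}=\{0\}$, yielding $x_0=y_0=1$, $z_0=0$, and $x_i+y_i=z_i$ for all $i\geq 1$. For (iii), the inequality $\sum_{i\in\mathcal{A}}2^{i+1}>2$ is clearly equivalent to $\mathcal{A}$ containing some $j\geq 1$.

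I do not anticipate any real obstacle; the only point requiring a moment of care will be the converse direction of (iii), where I would note that a single pattern-(A) bit at any $j\geq 1$ already contributes $2^{j+1}\geq 4>2$, so its presence alone, independent of whatever occurs at bit $0$, is enough to imply $x+y>z+2$. Once that is observed, all three equivalences fall out of the one identity above.
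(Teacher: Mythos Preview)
Your argument is correct and follows essentially the same route as the paper: you use the XOR condition to reduce each bit to either $x_i+y_i=z_i$ or $x_i=y_i=1,\,z_i=0$, and then read off the value of $x+y-z$ bit by bit. Your explicit identity $x+y-z=\sum_{i\in\mathcal{A}}2^{i+1}$ is a cleaner packaging than the paper's single inequality $x+y\ge z+2\cdot 2^i$, but the underlying idea is the same.
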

\begin{proof}
By (\ref{xoplusyoplusz0}), for each $i=0,1,2,\dots,n$, we have
\begin{equation}
x_i+y_i=z_i    \label{xiyiequzi}
\end{equation}
or
\begin{equation}
x_i=y_i=1  \text{ and }  z_i=0. \label{xieqyi1andzi0}
\end{equation}
If we have (\ref{xieqyi1andzi0}) for some $i \in \mathbb{Z}_{\ge 0}$, $x+y \geq z + 2 \times 2^i$.
Therefore, we have $(i)$, $(ii)$ and $(iii)$.
\end{proof}

\begin{lemma}\label{lemmaforn02}
$(i)$ For $x,y,z \in N_{0,1}$, we have $x,y \geq 2$,
$z\geq 4$, $z \geq x+2, y+2$ and $x+y=z$.\\
$(ii)$ 
For $(x,y,z) \in N_{0,2}$, we have $x,y \geq 3$, $z \geq x+1, y+1$, $x+y=z+2$, $x,y$ are odd and $z$ is even.\\
$(iii)$ For $(x,y,z) \in P_{1,1}$, we have $x,y \geq 2$, $z \geq 3$, $z \geq x+1,y+1$ and $x+y = z+1$ or $x+y=z-1$.\\
$(iv)$ For $(x,y,z) \in P_{1,2}$, we have 
$x,y,z \geq 3$, $z \geq x+2, y+2$, and 
$x+y=z+1$.\\
$(v)$ For $(x,y,z) \in P_{0,1}$, we have 
$x,y,z \geq 1$ or $(x,y,z) \in \{(0,k,k):k \in \mathbb{Z}_{\ge 0}\} \cup \{(k,0,k):k \in \mathbb{Z}_{\ge 0}\}.$  $x+y+z$ is even.
\\
$(vi)$ For $(x,y,z) \in P_{0,2}$, we have 
$x,y,z \geq 1$ and $x+y+z$ is even.
\end{lemma}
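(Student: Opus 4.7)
The plan is to unpack each of the six claims by reading off the defining conditions in Definition \ref{defofpn} and invoking Lemma \ref{lemmaforxyz} to extract bitwise consequences of $x \oplus y \oplus z = 0$. Part (i) is purely definitional: $N_{0,1}$ already demands $x,y \geq 2$ and $x+y=z$, so $z \geq 4$ and $z \geq x+2, y+2$ follow at once. For (ii), $N_{0,2}$ supplies $x + y = z + 2$ together with $x \oplus y \oplus z = 0$, so Lemma \ref{lemmaforxyz}(ii) forces $x_0 = y_0 = 1$ and $z_0 = 0$, giving $x,y$ odd and $z$ even; combined with $y \leq z$, the equation $x + y = z + 2$ yields $x \geq 2$, which upgrades to $x \geq 3$ by parity, and symmetrically $y \geq 3$, from which $z \geq x+1$ and $z \geq y+1$ are immediate.

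For (iii), unfold the definition of $P_{1,1}$ into two branches: either $(x,y,z-1) \in N_{0,1}$ with $x+y$ even, yielding $x+y = z-1$, or $(x,y,z+1) \in N_{0,1}$ with $x+y$ odd, yielding $x+y = z+1$; in each branch, part (i) supplies $x,y \geq 2$ together with the required bounds on $z$. The main obstacle is part (iv), where one must notice that by (ii) every $(x,y,z) \in N_{0,2}$ has both $x$ and $y$ odd, so $x+y$ is automatically even. Consequently the odd-sum branch of $P_{1,2}$ is vacuous, and only $(x,y,z-1) \in N_{0,2}$ with $x+y$ even contributes, giving $x+y = z+1$. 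From (ii) this yields $x,y \geq 3$ and $z - 1 \geq x + 1$, hence $z \geq x+2$, symmetrically $z \geq y+2$, and $z \geq 3$. Overlooking this parity observation would spuriously introduce a second case $x+y = z-1$ in (iv), which is exactly the trap to avoid.

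For (v), split on which of $x, y$ is $< 2$. If $x = 0$ then $y = z$, producing $(0, k, k)$; symmetrically if $y = 0$. If $x = 1$ (and $y \geq 1$), then $y + 1 = z$ combined with $1 \oplus y \oplus z = 0$ forces $y$ to be even, hence $y \geq 2$ and $z \geq 3$, placing all three coordinates in $\mathbb{N}$; symmetrically for $y = 1$. Evenness of $x + y + z$ is then immediate in each subcase. For (vi), Lemma \ref{lemmaforxyz}(iii) applied to $x + y > z + 2$ furnishes some $j \geq 1$ with $x_j = y_j = 1$, so $x, y \geq 2^j \geq 2$ and hence $z \geq x \geq 2$; parity of $x + y + z$ follows by inspecting the two possible low-bit configurations allowed by Lemma \ref{lemmaforxyz}, both of which make $x_0 + y_0 + z_0$ even.
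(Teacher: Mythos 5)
Your proposal is correct and follows essentially the same route as the paper: unpack Definition \ref{defofpn} for each set, apply Lemma \ref{lemmaforxyz} to extract the low-bit information from $x \oplus y \oplus z = 0$, and observe that the odd-sum branch of $P_{1,2}$ is vacuous because $x,y$ are both odd for points of $N_{0,2}$. The only real divergence is local and in your favor: in (ii) you get $x,y \geq 3$ from $x = z+2-y \geq 2$ plus parity, which is cleaner than the paper's argument chasing bit indices $i,j$ where $z$ exceeds $x$ and $y$, and in (vi) you get $x,y \geq 2$ directly from Lemma \ref{lemmaforxyz}(iii) rather than by ruling out each zero coordinate by contradiction.
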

\begin{proof}
$\mathrm{(i)}$
For $(x,y,z) \in N_{0,1}$, 
by (ii.1) of  Definition \ref{defofpn},
\begin{equation}
x + y = z \label{xyequaltoz}
\end{equation}
and 
\begin{equation}
x, y \geq 2.  \label{xylargerthan2}  
\end{equation}
By (\ref{xyequaltoz}) and (\ref{xylargerthan2} ), 
\begin{equation}
z \geq x+y \geq x+2, y+2  
\end{equation}
and
\begin{equation}
z \geq 4.  
\end{equation}
$\mathrm{(ii)}$
Let $(x,y,z) \in N_{0,2}$. Then, by (ii.2) of  Definition \ref{defofpn},
\begin{equation}
x \oplus y \oplus z = 0 \label{xyz0n02}
\end{equation}
and
\begin{equation}
x + y = z+2.  \label{xyequalzplus2}
\end{equation}
By (\ref{xyz0n02}), (\ref{xyequalzplus2}), and $(ii)$ of Lemma \ref{lemmaforxyz},
\begin{equation}
x_0, y_0 =1  \text{ and }  z_0=0.  \label{x0y0eq1andz00}
\end{equation}
By (\ref{x0y0eq1andz00}), $x,y$ are odd and $z$ is odd.
Since $x \leq z$, by (\ref{lemmaforxyz} and (\ref{x0y0eq1andz00}), there exists $i \geq 1$ such that $z_i = 1$ and $x_i=0$. By (\ref{xyz0n02}), $y_i=1$. 
Since $y_0=1$, $y_i=1=z_i$, and $y \leq z$, there exists $j \geq 1$ such that $z_j = 1$ and $y_j=0$. By (\ref{xyz0n02}), $x_j=1$. Therefore, we have 
\begin{equation}
x,y \geq 3   \nonumber
\end{equation}
and
\begin{equation}
z=x+y-2 \geq x+1, y+1.   \nonumber
\end{equation}
$\mathrm{(iii)}$ 
Let $(x,y,z) \in P_{1,1}$. 
then there exists $w$ such that
$(x,y,w) \in N_{0,1}$, $z = w+1$ or $z=w-1$.
By (\ref{xyequalzplus2}) and $(i)$ of this lemma,
\begin{equation}
x+y = w = z+1 \text{ or} z-1. \label{xplusyleqzp1}
\end{equation}
By $(i)$, $x,y \geq 2$ and
\begin{equation}
z \geq w-1 \geq x+1,y+1.   \nonumber
\end{equation}
$\mathrm{(iv)}$  For $(x,y,z) \in P_{1,2}$, there exists $w$ such that 
$z = w-1$ or $z = w+1$ and 
$(x,y,w) \in N_{0,2}$.
Then,
\begin{equation}
x+y = w+2. \label{uvwprime2}
\end{equation}
By $(ii)$, 
\begin{equation}
x,y \geq 3,\label{xylarge3}
\end{equation}
$x,y$ are odd, and $x+y$ is even. Hence we have 
$z=w+1$, and by (\ref{uvwprime2}) and (\ref{xylarge3}),
\begin{equation}
z = w+1 = x+y -1 \geq x+2, y+2.  \label{uvwprime3}
\end{equation}
By (\ref{uvwprime2}),
$x+y=z+1$.\\
$\mathrm{(v)}$ Let $(x,y,z) \in P_{0,1}$. Then, by (i.1) of  Definition \ref{defofpn},
\begin{equation}
x \oplus y \oplus z = 0\label{xyzoplus0a}
\end{equation}
and
\begin{equation}
x+y=z \text{ and } x,y \leq z.\label{xpyeqalzxyz}
\end{equation}
Hence, if $z=0$, by (\ref{xpyeqalzxyz}) we have $x=y=z=0$.
If $x=0$, by (\ref{xpyeqalzxyz}) we have $y=z$.
If $y=0$, by (\ref{xpyeqalzxyz}) we have $x=z$.
By (\ref{xyzoplus0a}), $x+y+z$ is even.\\
$\mathrm{(vi)}$ Let $(x,y,z) \in P_{0,2}$. Then, by (i.1) of  Definition \ref{defofpn},
\begin{equation}
x \oplus y \oplus z = 0\label{xyzoplus0}
\end{equation}
and
\begin{equation}
x+y>z+2 \text{ and } x,y \leq z.\label{xpyeqalzp2}
\end{equation}
If $z=0$, by (\ref{xpyeqalzp2}) we have $x=y=z=0$. This contradicts (\ref{xpyeqalzp2}).
If $x=0$, by (\ref{xyzoplus0}) we have $y=z$.  This contradicts (\ref{xpyeqalzp2}).
If $y=0$ by (\ref{xyzoplus0}) we have $x=z$.  This contradicts (\ref{xpyeqalzp2}).
By (\ref{xyzoplus0}), $x+y+z$ is even.\\
\end{proof}

\begin{lemma}\label{lemmafp1andp0}
$(i)$ Suppose that we move from $(x,y,z) \in P_{0}$ to $(u,v,w) 
 \in P_1$ without using amalgamation. Then we have one of the followings:\\
 $(a)$  $x=u+1$, $y=v$, $z=w$, and $x$ is odd;\\
 $(b)$   $x=u$, $y=v+1$, $w=z$, and $y$ is odd;\\
 $(c)$   $x=u$, $y=v$, $z=w+1$, and $z$ is odd.\\
$(ii)$ Suppose that we move from $(u,v,w) \in P_{1}$ to $(x,y,z) 
 \in P_0$ without using amalgamation. Then we have one of the followings:\\
$(a)$  $u=x+1$,$v=y$, $w=z$ and $u$ is odd;\\
 $(b)$   $u=x$,$v=y+1$, $w=z$  and $v$ is odd;\\
 $(c)$   $x=u$,$y=v$, $w=z+1$  and $w$ is odd.
\end{lemma}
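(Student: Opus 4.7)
The plan is to prove part (i); part (ii) follows by the same argument with the move viewed in reverse. The proof combines a parity invariant with a case analysis on the sorted forms of $(x,y,z)$ and $(u,v,w)$.

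First, I would establish a parity invariant. By Lemma~\ref{lemmaforn02}(v),(vi), every element of $P_0$ has an even coordinate-sum; and every element of $P_1$ has an odd coordinate-sum, because Definition~\ref{defofpn}(iii),(iv) builds $P_{1,j}$ from an $N_{0,j}$-element by shifting the third coordinate by $\pm 1$, which flips the sum's parity. A non-amalgamation move alters exactly one coordinate, so that coordinate must change parity. The sets $P_0$ and $P_1$ are each invariant under every permutation of the three coordinates: Definition~\ref{defofpn}(v),(vi) unions three cyclic representatives, and each of $P_{0,i},P_{1,j}$ is already symmetric in its first two arguments, so together all six permutations are covered. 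Thus I may relabel to assume the altered coordinate is the third, giving $u=x$, $v=y$, $w<z$ with $z-w$ a positive odd integer; cases (a) and (b) will follow from (c) by undoing this relabeling.

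Next I would sort $(x,y,z)$ as $(a_0,b_0,c_0)$ and $(x,y,w)$ as $(a_1,b_1,c_1)$ with ascending entries, and case-split on which of $P_{0,1},P_{0,2}$ hosts the first sort and which of $P_{1,1},P_{1,2}$ hosts the second. By Lemma~\ref{lemmaforn02} the maximum entry always lies in the last slot, and the four defining conditions reduce to compact ``sum-versus-max'' identities: $P_{0,1}$ gives $a_0+b_0=c_0$ with $a_0\le 1$; $P_{0,2}$ gives $a_0+b_0\ge c_0+4$ with $a_0\ge 3$; $P_{1,j}$ gives $a_1+b_1=c_1\pm 1$ with $a_1\ge 2$. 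I then condition on whether the altered coordinate $z$ sits in the max slot of the first sort and where $w$ lands after the move. In each sub-case the compact identities, combined with $w<z$ and the parity observation, either collapse via direct contradiction (the $P_{0,2}$ gap $\ge 4$ is incompatible with the $P_{1,j}$ near-equality $\pm 1$ when the sort-maxes coincide, and the $P_{0,1}$ condition $a_0\le 1$ conflicts with the $P_{1,j}$ minimum-entry requirement whenever a small entry survives the move) or pin down the altered coordinate to decrease by exactly $1$, yielding $z=w+1$; the parity invariant then forces $z$ odd, which is conclusion (c).

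The main obstacle is the positional bookkeeping and the XOR analysis: the altered coordinate may occupy the max, middle, or min slot in either sort, producing several sub-cases, and in the $P_{0,2}$ branch the coarse sum-versus-max bounds alone do not exclude intermediate decreases $z-w\in\{3,5,\ldots\}$. For these, I would invoke Lemma~\ref{lemmaforxyz} to translate $a_0\oplus b_0\oplus c_0=0$ into its bitwise additive form and show it cannot coexist with the shifted identity $a_1\oplus b_1\oplus(c_1\pm 1)=0$ under $w<z$ unless $z-w=1$; the key observation here is that $x+w=x\oplus w$ forces $x$ and $w$ disjoint bitwise, which contradicts the bit-sharing forced by the $P_{0,2}$ gap condition via Lemma~\ref{lemmaforxyz}(iii). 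Part (ii) is then proved by the identical case analysis with the direction of the implication reversed; its conclusions (a), (b), (c) are the mirror images of part (i)'s, with the ``before'' coordinate forced to equal the corresponding ``after'' coordinate plus one and to be odd.
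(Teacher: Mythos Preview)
Your plan is substantially more complicated than necessary, and the ``main obstacle'' you describe disappears once you strengthen your invariant. You use only the mod-$2$ parity of the coordinate sum to distinguish $P_0$ from $P_1$; the paper instead observes that every position in $P_0$ has nim-sum $0$ and every position in $P_1$ has nim-sum exactly $1$. The latter is immediate from Definition~\ref{defofpn}: an $N_{0,j}$ position $(x,y,w)$ has $x\oplus y\oplus w=0$, and the parity rule (add $1$ when $x+y$ is even, subtract $1$ when odd) is precisely the rule that toggles the lowest bit of $w$, so the resulting $P_{1,j}$ position has nim-sum $1$. With this in hand the lemma is a two-line argument: if a single coordinate changes, say $x\to u$ with $v=y$, $w=z$, then $u\oplus x=(u\oplus v\oplus w)\oplus(x\oplus y\oplus z)=1$, so $u$ and $x$ differ only in bit $0$; since $u<x$ this forces $u=x-1$ and $x$ odd. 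Part~(ii) is identical.

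Your route via sorting and sum-versus-max identities could in principle be pushed through, but the sketch as written is incomplete: the $P_{0,2}$ branch is hand-waved, the claimed identity ``$x+w=x\oplus w$'' appears with variables that don't match the surrounding context (it looks like you mean the bitwise-disjointness of the two smaller sorted entries in the $N_{0,1}$ target, not of $x$ and $w$), and you end up invoking the relation $a_1\oplus b_1\oplus(c_1\pm 1)=0$ anyway---which is exactly the nim-sum-$1$ fact that would have finished the whole proof at the outset without any case analysis.
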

\begin{proof}
Suppose that $(x,y,z) \in P_0$ and  $(u,v,w) \in P_1$ . Then,
\begin{equation}
x \oplus y \oplus z = 0 \label{xyz0}
\end{equation}
and 
\begin{equation}
u \oplus v \oplus w = 1. \label{uvw1}
\end{equation}
$\mathrm{(i)}$
It is sufficient to prove the case that $u<x$, $v=y$ and $w=z$. 
Suppose that $u_i=x_i$ for $i = n,n-1, \dots, j+1$ and $u_j=0 < 1=x_j$. If $j \geq 1$, then 
$u \oplus v \oplus w \geq 2$, and this contradicts (\ref{uvw1}).
Therefore, $j=0$, $u=x-1$ and $x$ is odd.\\
$\mathrm{(ii)}$ It is sufficient to prove the case that $x<u$, $y=v$ and $z=w$.
Suppose that $x_i=u_i$ for $i = n,n-1, \dots, j+1$ and $x_j=0 < 1=u_j$. If $j \geq 1$, then 
$x \oplus y \oplus z \geq 2$, and this contradicts (\ref{xyz0}).
Therefore, $j=0$, $x=u-1$ and $u$ is odd.\\
\end{proof}

\begin{lemma}\label{lemmafp1andp0b}
$(i)$ Suppose that we move from $(x,y,z) \in P_{0}$ to $(u,v,w) 
 \in P_1$ without using amalgamation. Then,
 \begin{equation}
|x-u|+|y-v|+|z-x| \leq 1.     
 \end{equation}
$(ii)$ Suppose that we move from $(x,y,z) \in P_1$ to $(u,v,w) 
 \in P_0$ without using amalgamation. Then,
 \begin{equation}
|x-u|+|y-v|+|z-x| \leq 1.     
 \end{equation}
 \end{lemma}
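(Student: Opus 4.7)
The plan is to read off both parts of the lemma as immediate consequences of Lemma~\ref{lemmafp1andp0}. A move that does not use amalgamation is of type $(i)$ in Definition~\ref{defofamanimris}: it strictly decreases the number of stones in exactly one of the three piles while leaving the other two piles unchanged. Consequently, of the three differences $|x-u|$, $|y-v|$, $|z-w|$ (which is the natural reading of the third summand in the statement), exactly one is strictly positive and the other two vanish, so the left-hand side of the claimed inequality reduces to that single positive difference.

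To bound that difference by $1$, I would invoke Lemma~\ref{lemmafp1andp0} directly. For part $(i)$, by the symmetry among the three coordinates it suffices to consider the three cases according to which pile was reduced; these are precisely cases $(a)$, $(b)$, $(c)$ of Lemma~\ref{lemmafp1andp0}$(i)$, each of which asserts that the reduced pile shrank by exactly $1$ (and indeed pins down the parity of the coordinate that was decreased). Hence in every case the sum of absolute differences equals $1$, which is at most $1$. Part $(ii)$ is handled in exactly the same manner by appealing to Lemma~\ref{lemmafp1andp0}$(ii)$.

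Because the present statement is merely a quantitative repackaging of the case analysis already carried out, I do not foresee any substantive obstacle. The only mild care needed is in matching the single distinguished coordinate that Lemma~\ref{lemmafp1andp0} treats (via its "it is sufficient to prove the case that $u<x$, $v=y$ and $w=z$" reduction) with the three symmetric positions that a non-amalgamation move can affect here; once that symmetric case-split is made explicit, the three resulting sub-cases are precisely the three sub-cases $(a)$, $(b)$, $(c)$ of the previous lemma, and the bound falls out without any further calculation.
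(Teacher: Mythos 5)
Your proof is correct and matches the paper's approach: the paper likewise derives this lemma directly from Lemma~\ref{lemmafp1andp0}, and your reading of the third summand as $|z-w|$ is the intended one. The paper's own proof is just the one-line remark that both parts are immediate from Lemma~\ref{lemmafp1andp0}; you have simply spelled out the same deduction.
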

\begin{proof}
$(i)$ and $(ii)$ are direct from Lemma \ref{lemmafp1andp0}.
\end{proof}

\begin{lemma}\label{p1notp0}
Suppose that  we start with a position  $(x,y,z) \in P_1$. Then, we cannot move to a position in $P_0$.
\end{lemma}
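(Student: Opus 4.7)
The plan is to exhibit two parity invariants separating $P_0$ from $P_1$ and then combine them with Lemmas \ref{lemmafp1andp0} and \ref{lemmafp1andp0b}. First I would check that every $(x,y,z) \in P_0$ satisfies $x \oplus y \oplus z = 0$ (hence $x+y+z$ is even), while every $(x,y,z) \in P_1$ satisfies $x \oplus y \oplus z = 1$ (hence $x+y+z$ is odd). The $P_0$ statement is immediate from the definition. For $P_1$, the parent triple in $N_{0,1}$ or $N_{0,2}$ has XOR zero and has its third coordinate $z'$ of the same parity as $x+y$; replacing $z'$ by $z' \pm 1$ (as prescribed by Definition \ref{defofpn}(iii)(iv)) flips only the lowest bit and so toggles both the XOR and the total parity. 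The conclusion is symmetric across the three permutations that comprise $P_1$.

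Next I would rule out an amalgamation move at one stroke: amalgamation preserves $x+y+z$, so one cannot pass from a position of odd total to a position of even total.

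For a Nim move, Lemma \ref{lemmafp1andp0b}(ii) shows that if such a move from $P_1$ to $P_0$ existed, the $\ell_1$-distance between start and end would be at most $1$. Since any legal Nim move strictly decreases the sum, the distance is exactly $1$; that is, the move subtracts $1$ from a single coordinate. Lemma \ref{lemmafp1andp0}(ii) then forces that coordinate to be odd. Hence the only Nim move that might reach $P_0$ is a $-1$ decrement of an odd coordinate of $(x,y,z) \in P_1$.

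Finally I would eliminate this remaining possibility by a case split on which of $P_{1,1}$, $P_{1,2}$, and which of the three permutations, supplies $(x,y,z)$, and on which odd coordinate is decremented. Lemma \ref{lemmaforn02}(iii)(iv) provides the key structural data: $x+y=z \pm 1$ (for $P_{1,1}$) or $x+y=z+1$ (for $P_{1,2}$), together with $x,y \geq 2$ (resp.\ $\geq 3$) and $z$ strictly larger than $x$ and $y$. Decrementing a single coordinate by $1$ shifts $x+y$ relative to $z$ by at most $1$, so in the resulting triple $x+y$ differs from $z$ by $0$, $1$, or $2$; in no subcase does the result meet either defining condition of $P_{0,1}$ or $P_{0,2}$: when the equation $x+y=z$ happens to be achieved, the clause $\min(x,y)<2$ required by $P_{0,1}$ fails because of the lower bounds on $x, y$, and the strict inequality $x+y>z+2$ of $P_{0,2}$ is never reached because $x+y$ is only nudged by $1$ from $z \pm 1$. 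The permuted forms of $P_0$ contribute nothing because $z$ remains the (strict) maximum of the resulting triple. The main obstacle is just the bookkeeping of these subcases; no new idea beyond the parity invariants and the size bounds from Lemma \ref{lemmaforn02} is needed.
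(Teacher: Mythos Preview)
Your proposal is correct and follows essentially the same route as the paper: rule out amalgamation via the parity of $x+y+z$ (odd on $P_1$, even on $P_0$), and rule out a Nim move by combining the distance bound of Lemma~\ref{lemmafp1andp0b}(ii) with the size constraints of Lemma~\ref{lemmaforn02}(iii)(iv) to show the result can satisfy neither the $\min(x,y)<2$ clause of $P_{0,1}$ nor the $x+y>z+2$ clause of $P_{0,2}$. Your extra step of invoking Lemma~\ref{lemmafp1andp0}(ii) to pin the single decrement to an odd coordinate is a harmless sharpening the paper does not make explicit, but it does not change the architecture of the argument.
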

\begin{proof}
$[I]$ Suppose that we use amalgamation. If $(x,y,z) \in P_{1,1} \cup P_{1,2}$, then by 
$(iii)$ and $(iv)$ of Lemma \ref{lemmaforn02}, 
$x+y=z+1$ or $x+y=z-1$. Then,
$x+y+z=2z+1$ or $x+y+z=2z-1$, and $x+y+z$ is odd.
For any $(u,v,w) \in P_0$, by $(v)$ and $(vi)$ of Lemma \ref{lemmaforn02}, 
$u+v+w$ is even.
By using amalgamation, the total number of stones in three piles will not change.
Hence we cannot move from $(x,y,z)$ to $(u,v,w)$.\\
$[II]$
Suppose that we start with $(x,y,z) \in P_{1,1}$ and we move to 
\begin{equation}
(u,v,w) \in P_{0} \label{uvwinp0}
\end{equation}
 without using amalgamation. 
By $(iii)$ of Lemma \ref{lemmaforn02}, 
\begin{equation}
x,y,z \geq 2,\label{conditiona}
\end{equation}
\begin{equation}
x,y \leq z-1 \label{conditionb}
\end{equation}
and
\begin{equation}
x+y \leq z+1 \label{conditione}
\end{equation}
By (\ref{conditiona}),
there exist $i,j,k \in \mathbb{N}$ such that 
\begin{equation}
x_i=y_j=z_k=1. \label{xyz1a}
\end{equation}
By (ii) of Lemma \ref{lemmafp1andp0b} and (\ref{conditionb}),
\begin{equation}
u,v \leq w, \nonumber
\end{equation}
and hence by (\ref{uvwinp0}) we have 
\begin{equation}
(u,v,w) \in P_{0,1} \cup P_{0,2}. \label{uvwinp1p02}
\end{equation}
By (\ref{xyz1a}) and (ii) of Lemma \ref{lemmafp1andp0b},
\begin{equation}
u,v,w \geq 2. \label{conditiond}
\end{equation}
By (i.1) of Definition \ref{defofpn} and (\ref{conditiond}),
\begin{equation}
(u,v,w) \notin P_{0,1}.\nonumber
\end{equation}
By (\ref{conditione}) and (ii) of Lemma \ref{lemmafp1andp0b},
\begin{equation}
u+v \leq w+2,
\end{equation}
and this contradicts (i.2) of Definition \ref{defofpn}.\\
$[III]$
We suppose that we start with 
\begin{equation}
(x,y,z) \in P_{1,2}
\end{equation}
and  we move to 
\begin{equation}
(u,v,w) \in P_{0} \label{uvwinp0in}
\end{equation}
without using amalgamation.
By (iv) of Lemma \ref{lemmaforn02},
\begin{equation}
x,y,z \geq 3,   \label{p12top01}
\end{equation}
\begin{equation}
z \geq x+2, y+2, \label{p12top02}  
\end{equation}
and 
\begin{equation}
x+y=z+1.  \label{p12top03}
\end{equation}
By (\ref{p12top02}) and (ii) of Lemma \ref{lemmafp1andp0b}
\begin{equation}
w \geq u, v. \label{p12top02wlar}  
\end{equation}
Hence by (\ref{uvwinp0in}),
\begin{equation}
(u,v,w) \in P_{0,1}\cup P_{0,2}. \label{uvwinp0in2}
\end{equation}
$[III.1]$
By (\ref{p12top01}) and and (ii) of Lemma \ref{lemmafp1andp0b},
\begin{equation}
u,v,w \geq 2.
\end{equation}
Hence, by (i) of Definition \ref{defofpn},
$(u,v,w) \notin P_{0,1}.$\\
$[III.2]$ Suppose that $(u,v,w) \in P_{0,2}.$
Then, by (i.2) of Definition \ref{defofpn}  
\begin{equation}
u+v > w+2.\label{uvlargerthanw2}    
\end{equation}
(\ref{uvlargerthanw2} ) and (\ref{p12top03})
contradict (ii) of Lemma \ref{lemmafp1andp0b}.
\end{proof}

\begin{lemma}\label{p0notp0}
Suppose that we start with a position  $(x,y,z) \in P_0$. Then, we cannot move to a position in $P_0$.
\end{lemma}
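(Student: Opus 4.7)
My plan is to split the analysis according to whether the candidate move is a Nim-style reduction of a single pile or an amalgamation of two piles.

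For Nim-style moves, I would use the fact that every position in $P_0$ satisfies $x \oplus y \oplus z = 0$, since this condition is symmetric in the three coordinates and appears in the definition of both $P_{0,1}$ and $P_{0,2}$. A single-pile reduction changes exactly one coordinate, so the Nim-sum of the new triple is nonzero, and the result cannot lie in $P_0$. This disposes of the non-amalgamation case in a couple of lines.

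For amalgamation moves, the result has exactly one coordinate equal to $0$. For such a triple to lie in $P_0$, it would have to be a permutation of a triple in $P_{0,1} \cup P_{0,2}$. Lemma \ref{lemmaforn02}(vi) rules out $P_{0,2}$ immediately, since all three coordinates of any $P_{0,2}$ triple are at least $1$. Lemma \ref{lemmaforn02}(v) shows that a $P_{0,1}$ triple with a zero coordinate must have multiset shape $\{0,k,k\}$. So the question reduces to whether amalgamation from a $P_0$ position can produce a triple whose multiset is $\{0,k,k\}$; equivalently, whether $A+B=C$, where $A,B\ge 2$ are the merged piles and $C$ is the remaining pile.

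I would then settle this in two sub-cases according to whether the starting triple, up to permutation, comes from $P_{0,2}$ or from $P_{0,1}$. In the $P_{0,2}$ case one first checks (from the Nim-sum condition and $p,q\le r$) that $p,q\ge 2$, and then the inequalities $p+q>r+2$ and $p,q\le r$ rule out each of the three possible choices of merging pair, since in each choice one has $A+B>C$ strictly. In the $P_{0,1}$ case the constraint $\min(p,q)<2$ together with the requirement $A,B\ge 2$ restricts the starting multiset to $\{0,k,k\}$ with $k\ge 2$ or $\{1,q,q+1\}$ with $q\ge 2$ (and $q$ even, by the Nim-sum); a short enumeration shows the only legal merge produces $\{0,0,2k\}$ or $\{0,1,2q+1\}$ respectively, neither of which is of the form $\{0,\ell,\ell\}$ with $\ell\ge 1$. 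I expect the main obstacle to be precisely this enumeration in the $P_{0,1}$ sub-case: one must track both the mergeability condition (which pairs have value $\ge 2$) and the resulting multiset carefully, since a sloppy argument could miss the corner cases where $0$ or $1$ appears. The Nim-style branch and the $P_{0,2}$ branch are both essentially one-line arguments, so the real work is concentrated here.
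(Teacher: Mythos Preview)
Your proposal is correct and follows the same two-branch outline (Nim move vs.\ amalgamation) as the paper, and in the amalgamation branch you arrive at the same key reduction $A+B=C$. The difference is in how you finish. Once $A+B=C$ is known, the remaining pile $C$ is automatically the largest coordinate, so in the ordered representative $(p,q,r)\in P_{0,1}\cup P_{0,2}$ with $p,q\le r$, the merged pair \emph{must} be $\{p,q\}$. The paper exploits this: in $P_{0,1}$ the condition $\min(p,q)<2$ then contradicts the merge requirement $p,q\ge2$ in one line, and in $P_{0,2}$ the condition $p+q>r+2$ contradicts $p+q=r$ in one line. Your version instead checks all three merging pairs in the $P_{0,2}$ case and enumerates the possible multisets $\{0,k,k\}$, $\{1,q,q+1\}$ in the $P_{0,1}$ case; this is valid but does by hand what the ordering observation gives for free. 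In particular, the ``real work'' you anticipate in the $P_{0,1}$ sub-case evaporates once you note that the only merge compatible with $A+B=C$ is the forbidden one.
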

\begin{proof}
We suppose that and we move to 
$(u,v,w) \in P_0$.
    Since $(x,y,z), (u,v,w) \in P_0$, we have
\begin{equation}
x \oplus y \oplus z =0   \label{nimsum01} 
\end{equation}
and
\begin{equation}
u \oplus v \oplus w =0.  \label{nimsum02} 
\end{equation}  
$[II]$ If we move to $(u,v,w)$ without using amalgamation, i.e.,(\ref{movex}), (\ref{movex}), (\ref{movex}) of Definition \ref{moveofalmarest}, a move from $(x,y,z)$ to $(u,v,w)$ that satisfies (\ref{nimsum01}) and (\ref{nimsum02}) is impossible.\\
$[II]$ Suppose that we move to $(u,v,w)$ by using amalgamation.
By We assume without any loss of generality,
$(u,v,w)=(x+y,0,z)\in P_0$.
By (\ref{nimsum02}), 
\begin{equation}
    x+y=z,\label{xplusyequalz}
\end{equation}
and hence
\begin{equation}
x,y \leq z.\label{xyzorder}
\end{equation}
Then, we have $(x,y,z) \in  P_{0,1} \cup  P_{0,2}.$\\
$[II.1]$ If $(x,y,z) \in  P_{0,1}$, by (i.1) of Definition \ref{defofpn} 
\begin{equation}
\min(x,y) <2. \label{minxyless2}
\end{equation}
(\ref{minxyless2}) contradicts the use of amalgamation of $x,y$.\\
$[II.1]$
If  $(x,y,z) \in  P_{0,2}$, by (i.2) of Definition \ref{defofpn} 
\begin{equation}
x+y > z+2. \label{xplusylargez2}
\end{equation}
(\ref{xplusylargez2}) contradicts (\ref{xplusyequalz}).
\end{proof}

\begin{lemma}\label{p1notp1}
Suppose that we start with a position  $(x,y,z) \in P_1$. Then, we cannot move to a position in $P_1$.
\end{lemma}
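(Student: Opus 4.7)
The plan is to mirror the structure of Lemma \ref{p1notp0} and split on whether the move uses amalgamation. Suppose $(x,y,z)\in P_1$ and we move to $(u,v,w)$. I would treat amalgamation first. Any amalgamation move, as defined in (\ref{movemerge1})--(\ref{movemerge3}) of Definition \ref{moveofalmarest}, produces a triple with at least one coordinate equal to $0$. By Lemma \ref{lemmaforn02}$(iii)$ and $(iv)$, every element of $P_{1,1}\cup P_{1,2}$ has all three coordinates at least $2$, and since $P_1$ is closed under coordinate permutation, no element of $P_1$ has a zero coordinate. Hence an amalgamation move cannot land in $P_1$.

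For non-amalgamation moves I would first establish the key observation that $x \oplus y \oplus z = 1$ for every $(x,y,z) \in P_1$. By permutation-invariance of XOR, it suffices to check this on $P_{1,1}\cup P_{1,2}$. In each case there is an auxiliary $w$ with $(x,y,w) \in N_{0,1}$ or $N_{0,2}$, so $x \oplus y \oplus w = 0$, and $z \in \{w+1, w-1\}$. For $P_{1,1}$ with $z = w+1$, the defining condition is $x+y$ even, so $w=x+y$ is even, the last bits of $x \oplus y$ and $w$ are $0$, and the last bit of $z$ is $1$, giving XOR $=1$; the branch $z=w-1$ is symmetric with $w$ odd. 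For $P_{1,2}$, Lemma \ref{lemmaforn02}$(ii)$ forces $x,y$ odd, hence $x+y$ even, so only the branch $z=w+1$ occurs and the same last-bit computation gives XOR $=1$.

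With the XOR fact in hand, suppose a non-amalgamation move sends $(x,y,z)\in P_1$ to $(u,v,w)\in P_1$. Such a move changes exactly one coordinate, so without loss of generality $u<x$, $v=y$, $w=z$. Then the observation gives
\begin{equation}
x \oplus y \oplus z \;=\; 1 \;=\; u \oplus y \oplus z,
\end{equation}
so $u = x$, contradicting $u<x$.

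The only substantive step is the XOR computation for $P_1$; once that is in hand, both cases collapse to one-line arguments (the amalgamation case by a coordinate-zero obstruction, the non-amalgamation case by XOR-cancellation as in Lemma \ref{lemmafp1andp0}). I expect no genuine difficulty beyond the parity bookkeeping for the two branches of $P_{1,1}$.
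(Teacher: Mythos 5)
Your proof is correct and follows essentially the same two-case strategy as the paper: amalgamation is ruled out because it creates a zero pile while every position in $P_1$ has all coordinates at least $2$, and a non-amalgamation move is ruled out by the fact that $x\oplus y\oplus z=1$ on $P_1$ together with XOR cancellation. You in fact supply the verification that the nim-sum equals $1$ on $P_1$, which the paper asserts without proof, and you cite Lemma \ref{lemmaforn02}$(iii)$,$(iv)$ for the coordinate bound where the paper's citation is garbled.
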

\begin{proof} 
Suppose that we start with a position  $(x,y,z) \in P_1$ and move to a position  $(u,v,w) \in P_1$.
Then, we have 
\begin{equation}
x \oplus y \oplus z = 1  \label{xyzsum1}
\end{equation}
and
\begin{equation}
u \oplus v \oplus w = 1  \label{uvwsum1}
\end{equation}
$[I]$ 
Suppose that we  move to $(u,v,w)$ by the usual operation of Nim, i.e.,(\ref{movex}), (\ref{movex}), (\ref{movex}) of Definition \ref{moveofalmarest}, 
(\ref{xyzsum1}) contradicts (\ref{uvwsum1}).\\
$[II]$ 
Since it is not possible by the usual operation of Nim to move from a position of Nim-sum 1 to a position to a Nim-sum 1, we assume that we use a merge of two piles.
Then, you get a pile of no stone. By Lemma \ref{lemmafp1andp0b}, for any position $(u,v,w) \in P_1$, $u,v,w \geq 1$, and hence 
you can not move to a position in $P_1$ by the merge of two piles. Therefore we can finish the proof of this lemma.
\end{proof}

\begin{lemma}\label{p01notp1}
Suppose that we start with a position  $(x,y,z) \in P_{0,1}$. Then, we cannot move to a position in $P_{1}$.
\end{lemma}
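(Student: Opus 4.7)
The plan is to combine one structural fact about $P_1$ with the two available move types of Definition \ref{defofamanimris}. First I would establish the key observation that every $(u,v,w) \in P_1$ has $u,v,w \geq 2$: this drops straight out of parts (iii) and (iv) of Lemma \ref{lemmaforn02}, which give $x,y \geq 2$, $z \geq 3$ on $P_{1,1}$ and $x,y,z \geq 3$ on $P_{1,2}$, together with the fact that $P_1$ is the closure of $P_{1,1} \cup P_{1,2}$ under coordinate permutations. So it suffices to show that every legal move out of $(x,y,z) \in P_{0,1}$ produces a position with at least one coordinate strictly less than $2$.

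Next I would use the hypothesis $\min(x,y) < 2$ from clause (i.1) of Definition \ref{defofpn}. Since $P_{0,1}$ is symmetric in its first two coordinates, I may assume without loss of generality that $x \in \{0,1\}$. I then split on the type of move. For a Nim-style reduction, one of (\ref{movex})--(\ref{movez}) of Definition \ref{moveofalmarest}, exactly one coordinate is strictly decreased. If that coordinate is $x$, the new value is strictly smaller than $x < 2$ and so is still $< 2$; if some other coordinate is reduced, then $x$ is untouched and the new first coordinate is still $x < 2$. In either case the output triple has an entry strictly below $2$ and therefore is not in $P_1$.

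For the amalgamation moves (\ref{movemerge1})--(\ref{movemerge3}), the explicit form of the definition shows that the resulting triple always contains a zero coordinate (the amalgamation places the combined pile in one slot and leaves the other slot empty). Since any $P_1$-position has all coordinates at least $2$, this rules out landing in $P_1$ as well. I do not foresee a genuine obstacle: the only small subtlety is that clause (v) of Lemma \ref{lemmaforn02} splits $P_{0,1}$ into the degenerate triples $(0,k,k)$, $(k,0,k)$ and the triples with all three coordinates $\geq 1$ and $\min(x,y)=1$, but the uniform bookkeeping ``some coordinate lies strictly below $2$'' handles both sub-cases in a single stroke.
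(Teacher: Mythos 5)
Your proposal is correct and follows essentially the same route as the paper: both use parts (iii) and (iv) of Lemma \ref{lemmaforn02} to get all coordinates of a $P_1$-position at least $2$, then dispose of amalgamation moves via the forced zero coordinate and of Nim-style moves via $\min(x,y)<2$. If anything, your write-up is slightly more careful than the paper's, since you spell out why a single-pile reduction from a position with a coordinate below $2$ still leaves some coordinate below $2$, a step the paper leaves implicit.
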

\begin{proof}
Suppose that we move to $(u,v,w)\in P_1$. 
By $(iii)$ and $(iv)$ of Lemma \ref{lemmaforn02}, 
\begin{equation}
u,v,w \geq 2. \label{uvwlarge2b}
\end{equation}
$[I]$ Suppose that we use amalgamation. Then,
we have $u=0$ or $v=0$ or $w=0$. These contradict $(\ref{uvwlarge2b})$.\\
$[II]$ Suppose that we do not use amalgamation.
Since $(x,y,z) \in P_{0,1}$,
\begin{equation}
\min(x,y) <2. \label{minxy2}
\end{equation}
Inequality (\ref{uvwlarge2b}) contradicts Inequality (\ref{minxy2}).
\end{proof}

\begin{lemma}\label{p02notp1}
Suppose that we start with a position  $(x,y,z) \in P_{0,2}$. Then, we cannot move to a position in $P_{1}$.
\end{lemma}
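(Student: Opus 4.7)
The plan is to split the argument into two cases depending on whether the move uses amalgamation, mirroring the structure of Lemma \ref{p01notp1}. The amalgamation case is immediate: every amalgamation move listed in lines (\ref{movemerge1})--(\ref{movemerge3}) of Definition \ref{moveofalmarest} produces a position containing a pile of $0$ stones, yet by Lemma \ref{lemmaforn02}(iii) and (iv) each position in $P_1$ has all three piles of size at least $2$. So only the non-amalgamation case requires care.

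For the non-amalgamation case I would introduce a single invariant to compare the two sets: set $T = x+y+z$ and $M = \max(x,y,z)$, and study $T - 2M$. By the defining inequality $x+y > z+2$ of $P_{0,2}$ together with $x, y \leq z$, we have $M = z$ and $T - 2M = (x+y) - z > 2$; being an integer, this gives $T - 2M \geq 3$. On the other hand, Lemma \ref{lemmaforn02}(iii) and (iv) show that for any $(u,v,w) \in P_1$ the maximum coordinate $M'$ strictly exceeds the other two, and those two sum to exactly $M' \pm 1$; consequently $T' = u+v+w$ satisfies $T' - 2M' \in \{-1, 1\}$ for every element of $P_1$.

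The contradiction is then forced by combining these two bounds. By Lemma \ref{lemmafp1andp0}(i) (applicable since $P_{0,2} \subseteq P_0$ by Definition \ref{defofpn}(v)), any non-amalgamation move from $(x,y,z)$ to a position in $P_1$ decreases exactly one pile by $1$, so $T' = T-1$ and $M' \in \{M-1, M\}$. Hence
\[
T' - 2M' \;\geq\; (T-1) - 2M \;=\; (T - 2M) - 1 \;\geq\; 2,
\]
contradicting $T' - 2M' \leq 1$. I do not foresee any serious obstacle. The one point that deserves a moment's check is the claim $M' \in \{M-1, M\}$ across each of the three subcases of Lemma \ref{lemmafp1andp0}(i), but this is immediate because only one coordinate changes and it changes by exactly $1$. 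The whole argument is essentially the observation that the quantity $T - 2M$ takes disjoint ranges on $P_{0,2}$ and on $P_1$, and a single Nim decrement of $1$ cannot bridge that gap.
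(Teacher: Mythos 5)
Your proof is correct and is essentially the paper's own argument in different packaging: both hinge on Lemma \ref{lemmafp1andp0} / Lemma \ref{lemmafp1andp0b} (a non-amalgamation move between $P_0$ and $P_1$ changes exactly one pile by exactly one stone) together with the observation that the imbalance $x+y-z$ (with $z$ the largest pile) is at least $3$ on $P_{0,2}$ but lies in $\{-1,1\}$ on $P_1$ by Lemma \ref{lemmaforn02}(iii) and (iv); your invariant $T-2M$ is precisely this quantity, and you obtain its lower bound from the integrality of $x+y>z+2$ where the paper instead sharpens it to $x+y\geq z+4$ via binary digits. One small improvement on your side: you explicitly dispose of the amalgamation case (a merge creates an empty pile while every $P_1$ position has all piles at least $2$), a case the paper's proof of this particular lemma passes over, since Lemma \ref{lemmafp1andp0b}(i) only covers non-amalgamation moves.
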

\begin{proof}
Suppose that we start with a position  $(x,y,z) \in P_{0,2}$.
By Lemma \ref{lemmaforxyz}, there exists $j \geq 1$ such that 
\begin{equation}
x_j=y_j=1 \label{xjyj1}
\end{equation}
and
\begin{equation}
z_j=0.\label{zj0} 
\end{equation}
Then,  
\begin{equation}
x+y \geq z+4.\label{xyz4}
\end{equation}
Since $x,y \leq z$, we have 
\begin{equation}
y+z \geq x+4\label{xyz5}
\end{equation}
and
\begin{equation}
z+x \geq y+4.\label{xyz6}
\end{equation}
If we move from $(x,y,z)$ to $(u,v,w) \in P_1$, then
by (i) of Lemma \ref{lemmafp1andp0b},
(\ref{xyz4}), (\ref{xyz5}), and (\ref{xyz6}), we have
\begin{equation}
u+v \geq w+3,\label{uvw1a}
\end{equation}
\begin{equation}
v+w \geq u+3\label{uvw2a}
\end{equation}
and
\begin{equation}
w+u \geq v+3.\label{uvw3a}
\end{equation}
Suppose that $(u,v,w) \in P_1$. Then, there exist $(s,t,u)$ that is a permutation of $u,v,w$ and $(s,t,u) \in P_{1,1} \cup P_{1,2}$.

By (iii) of Lemma \ref{lemmaforn02},  for $(s,t,u) \in P_{1,1}$, we have 
\begin{equation}
|s+t-u| \leq 1. \label{uvwless1}
\end{equation}
Since $(s,t,u)$ is a permutation of $u,v,w$, 
the inequality in  (\ref{uvwless1}) contradicts (\ref{uvw1a}), (\ref{uvw2a}) and (\ref{uvw3a}).

By (iv) of Lemma \ref{lemmaforn02},  for $(s,t,u) \in P_{1,2}$, we have 
\begin{equation}
s+t=u+1. \label{splustequ1}
\end{equation}
Since $(s,t,u)$ is a permutation of $u,v,w$, 
the inequality in  (\ref{splustequ1}) contradicts (\ref{uvw1a}), (\ref{uvw2a}) and (\ref{uvw3a}).
\end{proof}

\begin{lemma}\label{notprio}
If we start with a position in $(x,y,z) \notin P$ and we reach $(u,v,w) \in N_0$,
then we can also reach $(p,q,r) \in P_1$
\end{lemma}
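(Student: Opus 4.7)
The plan is: given a move $(x,y,z)\to(u,v,w)\in N_0$, slightly perturb the target to get a move $(x,y,z)\to(u,v,w\oplus 1)\in P_1$, where $w\oplus 1$ denotes $w+1$ if $w$ is even and $w-1$ if $w$ is odd.

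First I would observe that the given move cannot be an amalgamation. Parts $(i)$ and $(ii)$ of Lemma \ref{lemmaforn02} force every coordinate of any position in $N_{0,1}\cup N_{0,2}$, and hence of any position in $N_0$, to be at least $2$, while any amalgamation move produces a pile of size $0$. So the move is of Nim type; without loss of generality $x=u$, $y=v$, and $z>w$.

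The main step is the combinatorial claim that $(u,v,w\oplus 1)\in P_1$. Its Nim-sum is $1$, which is necessary. To verify actual membership I would split on which coordinate of $(u,v,w)$ plays the role of the ``large'' ($z$-)coordinate in the underlying $N_{0,1}$ or $N_{0,2}$ description. If $w$ itself is the large coordinate, then $(u,v,w\oplus 1)\in P_{1,1}\cup P_{1,2}$ follows immediately from Definition \ref{defofpn}, using that in $N_{0,1}$ the parity of $u+v$ equals that of $w$, and in $N_{0,2}$ the value $w$ is even by Lemma \ref{lemmaforn02}$(ii)$. If instead the large coordinate is $u$ (the case of $v$ being symmetric), so that $(v,w,u)\in N_{0,1}\cup N_{0,2}$, then I would consider the permutation $(v,w\oplus 1,u)$ of the new position and identify its candidate ``$z$''-value as $u\oplus 1$, which is forced by the Nim-sum equation. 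A short parity case-check on $u$ and $w$ then confirms that $(v,w\oplus 1,u\oplus 1)$ lies in $N_{0,1}$ or $N_{0,2}$ with parities matching the $\pm 1$ offset to $u$ in Definition \ref{defofpn}, so that $(v,w\oplus 1,u)\in P_{1,1}\cup P_{1,2}$. Many parity combinations are automatically excluded because $x\oplus y\oplus z=0$ together with $x+y=z$ forces the binary disjointness of $x$ and $y$ in $N_{0,1}$, which keeps the analysis manageable. Carrying out this sub-claim is the main obstacle in the proof.

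Finally, I would check that the move $z\to w\oplus 1$ is legal, i.e.\ $w\oplus 1<z$. If $w$ is odd this is immediate since $w\oplus 1=w-1<w<z$. If $w$ is even and $z=w+1$, then $(x,y,z)$ itself equals $(u,v,w\oplus 1)\in P_1\subset P$ by the claim, contradicting $(x,y,z)\notin P$; hence $z\geq w+2$ in that case as well. Thus the move $(x,y,z)\to(u,v,w\oplus 1)$ is a valid move reaching $P_1$, as required.
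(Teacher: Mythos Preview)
Your approach is correct and is, at its core, the same idea the paper uses: replace the decreased coordinate by its value $\oplus 1$ and check that the new target lies in $P_1$. The paper arrives at exactly these targets too, but organizes the case split the other way around (fixing the ``large'' coordinate to be the third one and then splitting on which pile was decreased), which spawns the long chain of sub-cases $[I]$, $[II.1]$--$[II.4]$, $[III]$. Your normalization (decrease the third pile, then split on which coordinate is large) together with the single formula $w\mapsto w\oplus 1$ compresses all of that into one uniform statement; the residual parity bookkeeping you flag as the ``main obstacle'' is precisely what the paper's sub-cases are doing, and it goes through because Lemma~\ref{lemmaforxyz} forces digit-disjointness in $N_{0,1}$ and Lemma~\ref{lemmaforn02}(ii) forces the parities in $N_{0,2}$, eliminating the problematic parity combinations. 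Your preliminary observation that the given move cannot be an amalgamation (since every coordinate of an $N_0$ position is at least $2$) is a point the paper leaves implicit.
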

\begin{proof}
Suppose that 
\begin{equation}
(x,y,z) \notin P.\label{aussume}
\end{equation}
$[I]$ Suppose that there exists $w < z$ such that
$(x,y,w) \in N_{0,1} \cup N_{0,2}$ and $x, y \leq w$. Then,
we have
\begin{equation}
x \oplus y \oplus w =0, \label{case1nimsum}
\end{equation}
and we have the following (\ref{inn01}) and (\ref{inn01bb})
or (\ref{inn02}).
If $(x,y,w) \in N_{0,1}$, 
\begin{equation}
x+y=w \label{inn01}
\end{equation}
and 
\begin{equation}
x \geq 2. \label{inn01bb}
\end{equation}
If $(x,y,w) \in N_{0,2}$,
\begin{equation}
x+y=w+2. \label{inn02}
\end{equation}
$[I.1]$ Suppose that $w$ is even.
Then, by (\ref{inn01}) or (\ref{inn02}), $u+v$ is even, and 
$(x,y,w+1) \in P_1$. If $z=w+1$, then
we have $(x,y,z) \in P_{1,1}\cup P_{1,2}$, which contradicts 
(\ref{aussume}).
Then, $z > w+1$, and we reach $(x,y,w+1) \in P_1$.\\
$[I.2]$ Suppose that $w$ is odd.
Then, $x+y$ is odd, and 
$(x,y,w-1) \in P_1$.
If $y>w-1$, then we have $w=y$.
Then, by (\ref{case1nimsum}) $x=0$.
Here, these $x,y,w$ do not satisfy
(\ref{inn01bb}) and (\ref{inn01})
nor  (\ref{inn02}). Hence we have 
$y \leq w-1$.
Since $z > w-1 \geq y $, we reach $(x,v,w-1) \in P_{1,1} \cup P_{1,2}$.\\
$[II]$ Suppose that there exists $v < y$ that satisfy
$(x,v,z) \in N_0$,
\begin{equation}
x, v \leq z, \label{xvzorder}
\end{equation}
\begin{equation}
x \oplus v \oplus z =0, \label{xvznimsum}
\end{equation}
and the following (\ref{inn01b}) and (\ref{inn01c}) or (\ref{inn02b}).
If $(x,v,z) \in N_{0,1}$,
\begin{equation}
x+v=z \label{inn01b}
\end{equation}
and
\begin{equation}
x \geq 2. \label{inn01c}
\end{equation}
If $(x,v,z) \in N_{0,2},$
\begin{equation}
x+v=z+2. \label{inn02b}
\end{equation}
$[II.1]$ Suppose that $x,v,z$ are even.
By (\ref{xvznimsum}), we have
\begin{equation}
x \oplus (v+1) \oplus (z+1) = 0. \label{xv1z1nimsum0}
\end{equation}
If we have (\ref{inn01b}), 
\begin{equation}
x+(v+1)=z+1. \label{xv1eqz1}
\end{equation}
If we have (\ref{inn02b}),
\begin{equation}
x+(v+1)=z+1. \label{xv1eqz12}
\end{equation}
Since $x\geq 2$, by (\ref{xv1eqz1}) or (\ref{xv1eqz12}), $(x,v+1,z+1) \in N_{0,1} \cup N_{0,2}$, and 
$(x,v+1,z) \in P_{1,1} \cup P_{1,2}$. If $v+1=y$, $(x,y,z) \in P_1$. This contradicts 
$(\ref{aussume})$. Then, $v+1 < y$, and we can reach
$(x,v+1,z) \in P_1$.\\
Since $z > w+1$, we reach $(u,v,w+1) \in P_1$.\\
$[II.2]$ Suppose that $v,z$ are odd and $x$ is even.
Then, $(x,v-1,z-1) \in N_0$. Since $x+v-1$ is even,
$(x,v-1,z) \in P_1$ and we can reach $(x,v-1,z) \in P_{1,1} \cup P_{1,2}$.\\
Then, $x+v$ is odd, and 
$(x,v,z-1) \in P_1$.\\
$[II.3]$ Suppose that $x,z$ are odd and $v$ is even. 

First, we prove that 
\begin{equation}
v+1 \leq z-1. \label{v1lessz1}
\end{equation}
If (\ref{v1lessz1}) is not true, then
we have 
\begin{equation}
v=z \label{veqz}
\end{equation}
or
\begin{equation}
v+1=z. \label{v1eqz}
\end{equation}
If we have (\ref{veqz}),
by (\ref{xvznimsum}) $x=0$ and $x+v=z$.
These contradict (\ref{inn01c}) and (\ref{inn01b},
and these contradict (\ref{inn02b}), too.

If we have (\ref{v1eqz}),
there exists $n$ such that $v=2n$, $z=2n+1$. Since
$x \oplus v \oplus z = 0$, we have $x=1$, and hence
$x+v=z$.
These contradict (\ref{inn01c}) and (\ref{inn01b},
and these contradict (\ref{inn02b}), too.

Therefore, 
\begin{equation}
v+1 \leq z-1. \label{v1lesszm1}
\end{equation}

If $x=z$, by (\ref{xvznimsum}) $v=0$. Hence
$x+v=z$. Since $(x,v,z) \in N_{0,1}$, we have
$x,v \geq 2$. This leads to a contradiction.
Therefore, 
\begin{equation}
x <z. \label{xlessthanz}
\end{equation}

If $y=v+1$,
$(x,y,z-1) \in N_{0,2}$.
Since $x+y$ is even, $(x,y,z) \in P_{1,2}$, and this is a contradiction. Therefore,
\begin{equation}
y > v+1\label{ylargethanv1}
\end{equation}

By (\ref{v1lesszm1}), (\ref{xlessthanz}) and (\ref{ylargethanv1}),
we can reach $(x,v+1,z) \in P{1,1}.$

If we have (\ref{inn01b}), $x+(v+1)=(z-1)+2$.
Since $v+1 \leq z-1$, $(x,v+1,z-1) \in N_{0,2}$, and we can reach $(x,v+1,z) \in P_1$.

If we have (\ref{inn02b}), 
\begin{equation}
x+v=z+2. \label{inn03}
\end{equation}
By (\ref{xvznimsum}), (\ref{inn03}) and  Lemma \ref{lemmaforxyz}, $x,v$ are odd. Since
$v$ is even, we have a contradiction.\\
$[II.4]$ Suppose that $x,v$ are odd and $z$ is even.
Then,
\begin{equation}
 x_0=v_0=1 \text{ and }  z_0=0. \label{xvz0}
\end{equation}
$[II.4.1]$ If we have (\ref{inn01b}), by (i) Lemma \ref{lemmaforxyz}, we have 
$x_0 + y_0 = z_0$. This contradicts (\ref{xvz0}).\\
$[II.4.2]$ Suppose that we have (\ref{inn02b}).
If $x=1$, then by (\ref{inn02b}), $v=z+1$. This contradicts (\ref{xvzorder}).\\
If $v=1$, then by (\ref{inn02b}), $x=z+1$. This contradicts (\ref{xvzorder}).\\
If $x \ne 1$ and $v \ne 1$, then by (\ref{xvz0})
\begin{equation}
x,v \geq 3. \label{xvlarger3}   
\end{equation}
By (\ref{xvznimsum}),
\begin{equation}
  x \oplus (v-1) \oplus (z+1)=0  \label{xv1z11}
\end{equation}
and by (\ref{inn02b}),
\begin{equation}
x + (v-1) = z+1.\label{xvminus1z1}
\end{equation}
By (\ref{xvlarger3}), $x, v-1 \geq 2$, and
by (\ref{xv1z11}) and (\ref{xvminus1z1}),
\begin{equation}
(x,v-1,z+1) \in N_{0,1}.\label{xvminus1z1b}   
\end{equation}
Since $x+(v-1)$ is odd, by (\ref{xvminus1z1b})
$(x,v-1,z+1) \in P_{1,1}.$\\
$[III]$ Suppose that there exists $u < x$ such that
$(u,y,z) \in N_0$ and 
\begin{equation}
u, y \leq z \label{xvzorder3}
\end{equation}
Then, we prove the result of this theorem by the method that 
is similar to the one used in $[II]$.

\end{proof}
\begin{lemma}\label{notop1}
If we start with a position in $(x,y,z) \in N_0$ and we can reach $(u,v,w) \in P_1$.
\end{lemma}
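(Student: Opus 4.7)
The plan is to produce, for every $(x,y,z) \in N_0$, an explicit single move to a position in $P$; the $N_{0,2}$ subcase in fact lands in $P_1$. Since $N_0$ is the union of $N_{0,1}\cup N_{0,2}$ with its three cyclic relabelings (Definition~\ref{defofpn}(vii)), it suffices to treat $(x,y,z) \in N_{0,1}$ and $(x,y,z) \in N_{0,2}$; the other arrangements are handled identically after relabeling the pile that plays the role of $x$. The two base cases call for very different moves: an amalgamation in the $N_{0,1}$ case and a single-stone removal in the $N_{0,2}$ case.

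In the case $(x,y,z) \in N_{0,1}$, Lemma~\ref{lemmaforn02}(i) guarantees $x,y \geq 2$, so amalgamating piles $1$ and $2$ is legal and produces $(x+y,0,z) = (z,0,z)$. I would then check directly that $(z,0,z) \in P_{0,1}$: the sum identity $z+0=z$ holds, both entries are at most $z$, the Nim-sum is $z \oplus 0 \oplus z = 0$, and $\min(z,0)=0<2$. Hence this move lands in $P_0 \subset P$.

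In the case $(x,y,z) \in N_{0,2}$, Lemma~\ref{lemmaforn02}(ii) supplies that $x,y$ are odd with $x,y \geq 3$, that $z$ is even, and that $x+y = z+2$ with $x,y \leq z$. I would decrease pile $1$ to $x-1$ (legal since $x-1 \geq 2$) and claim $(x-1,y,z) \in P_{1,1}$. By Definition~\ref{defofpn}(iii) this reduces to verifying $(x-1,y,z+1) \in N_{0,1}$ together with $(x-1)+y$ being odd. The oddness and the sum identity $(x-1)+y = z+1$ are immediate from $x+y = z+2$ and $z$ even, while the bounds $x-1,y \geq 2$ and $x-1,y \leq z+1$ are automatic. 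The only step needing care is the Nim-sum: since $x$ is odd and $z$ is even, $x-1 = x \oplus 1$ and $z+1 = z \oplus 1$, so
\[
(x-1) \oplus y \oplus (z+1) \;=\; (x\oplus 1)\oplus y\oplus (z\oplus 1) \;=\; x\oplus y\oplus z \;=\; 0.
\]

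The step most likely to cause trouble is precisely this Nim-sum verification in the $N_{0,2}$ case, but it collapses cleanly once one notices that subtracting $1$ from an odd number and adding $1$ to an even number each flip only the lowest bit, so the two flips cancel. Everything else is bookkeeping about the size and parity constraints already packaged in Lemma~\ref{lemmaforn02}.
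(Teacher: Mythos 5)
Your proposal is correct and follows essentially the same route as the paper: amalgamate the first two piles to reach $(z,0,z)\in P_{0,1}$ when $(x,y,z)\in N_{0,1}$, and remove a single stone to land in $P_{1,1}$ when $(x,y,z)\in N_{0,2}$ (you decrement $x$ where the paper decrements $y$, which is immaterial by the symmetry of $N_{0,2}$ in $x$ and $y$). Your explicit low-bit-flip verification of the Nim-sum is a welcome detail, and like the paper you correctly observe that the $N_{0,1}$ case actually lands in $P_0$ rather than $P_1$, which is all the main theorem requires.
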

\begin{proof}
$[I]$ Suppose that we start with $(x,y,z) \in N_{0,2}$.
Then, by $(ii)$ of Lemma \ref{lemmaforn02} and Definition \ref{defofpn},
\begin{equation}
x+y=z+2,\label{xplusyequalzplus2}    
\end{equation}
\begin{equation}
x \oplus y \oplus z = 0 \label{xyznim0forn02}    
\end{equation}
and $x,y$ are odd.
Then, 
\begin{equation}
x \oplus (y-1) \oplus (z+1) = 0  \label{xy1z1p1nim0}  
\end{equation}
and 
\begin{equation}
(x-1) \oplus (y-1) \oplus z = 0. \label{x1y1zp1nim0}  
\end{equation}
By (\ref{xplusyequalzplus2}), 
\begin{equation}
x+(y-1)=z+1,
\end{equation}
and by (\ref{xy1z1p1nim0}) 
$(x,y-1,z+1) \in N_{0,1}$.
Therefore, by  Definition \ref{defofpn}, $(x,y-1,z) \in P_{1,2}$, and 
we can move to $(x,y-1,z) \in P_{1,2}$.\\
$[II]$ Suppose that we start with $(x,y,z) \in N_{0,1}$.\\
Then, by Lemma \ref{lemmaforn02} and Definition \ref{defofpn},
$x+y=z$ and $x,y \geq 2$. Therefore, we can move to
$(x+y,0,z) \in P_{0,1}$ by amalgamation.
\end{proof}

\begin{theorem}
The set $P$ is the set of P-positions of the game of Definition \ref{defofamanimris}.
\end{theorem}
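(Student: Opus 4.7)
The plan is to establish the theorem by the standard characterization: a set $S$ equals the set of $\mathcal{P}$-positions of an impartial game if and only if (I) every move from a position in $S$ lands outside $S$, and (II) from every position outside $S$ there is at least one move into $S$. Terminality for $(0,0,0) \in P_{0,1} \subseteq P$ is automatic since there are no legal moves and no amalgamations can be applied to zero piles.

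For part (I), the work is essentially already done by the earlier lemmas; I only need to organize them. Because $P = P_0 \cup P_1$, I must rule out each of the four transitions $P_0 \to P_0$, $P_0 \to P_1$, $P_1 \to P_0$, $P_1 \to P_1$. Lemmas \ref{p0notp0} and \ref{p1notp1} handle the diagonal cases, Lemma \ref{p1notp0} handles $P_1 \to P_0$, and Lemmas \ref{p01notp1} and \ref{p02notp1}, together with the permutation symmetry built into the definition of $P_0$ (and the fact that the rules of Definition \ref{defofamanimris} are symmetric in the three coordinates), rule out $P_0 \to P_1$.

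For part (II), suppose $(x,y,z) \notin P$. By coordinate symmetry I may assume $x, y \le z$. I split on the Nim-sum. If $x \oplus y \oplus z = 0$, then by Lemma \ref{lemmaforxyz} applied to $x+y$ versus $z$, the triple falls into exactly one of $P_{0,1}$, $P_{0,2}$, $N_{0,1}$, or $N_{0,2}$; since $(x,y,z) \notin P$, the triple must lie in $N_{0,1} \cup N_{0,2} \subseteq N_0$, and Lemma \ref{notop1} supplies a move to $P_1 \subseteq P$. If $x \oplus y \oplus z \ne 0$, I use the standard Nim descent (one of the moves \eqref{movex}, \eqref{movey}, \eqref{movez}) to reach a position $(u,v,w)$ with $u \oplus v \oplus w = 0$ and still $u,v \leq w$ after rearrangement; this lands in $P_0 \cup N_0$. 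If it lands in $P_0$, we are done, and if it lands in $N_0$ then Lemma \ref{notprio} provides an alternative move from $(x,y,z)$ directly into $P_1$.

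I expect no serious obstacle: the heavy lifting is distributed across the earlier lemmas. The only subtlety to watch is bookkeeping for the permutation structure in $P_0$, $P_1$, and $N_0$, since the lemmas \ref{p01notp1}, \ref{p02notp1}, \ref{notprio}, \ref{notop1} are phrased for the representative ordering $x,y \le z$, and I must repeatedly invoke the coordinate symmetry of Definition \ref{defofamanimris} to cover arbitrary orderings. One minor case to double-check by hand is when a coordinate is $0$ or $1$: these cannot participate in an amalgamation move, so the ``$P_0 \to P_1$ via amalgamation'' attempt is blocked exactly as in the proofs of Lemmas \ref{p01notp1} and \ref{p02notp1}, confirming that the exceptional positions in $P_{0,1}$ of the form $(0,k,k)$ and $(k,0,k)$ behave correctly.
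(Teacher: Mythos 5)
Your proposal is correct and follows essentially the same route as the paper: part (I) assembles Lemmas \ref{p1notp0}, \ref{p0notp0}, \ref{p1notp1}, \ref{p01notp1}, and \ref{p02notp1} to show no move stays within $P$, and part (II) splits on the Nim-sum, using the standard Nim descent together with Lemmas \ref{notprio} and \ref{notop1} to reach $P$ from any position outside it. Your added attention to the permutation bookkeeping and the classification of zero-Nim-sum triples into $P_{0,1}\cup P_{0,2}\cup N_{0,1}\cup N_{0,2}$ only makes explicit what the paper leaves implicit.
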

\begin{proof}
$[I]$ By lemmas \ref{p1notp0}, \ref{p0notp0}, \ref{p1notp1}, \ref{p01notp1}, and  \ref{p02notp1}, if we start with a position in $P$, we cannot reach a position in $P$.\\
$[II]$ Suppose that we start with a position $(x,y,z) \notin P$.\\
$[I.1]$ If $x \oplus y \oplus z \ne 0$,
we can reach a position $(u,v,w)$ such that $u \oplus v \oplus w =0$. Then,
we have $(u,v,w) \in P_0$ or $(u,v,w) \in N_0$.
If $(u,v,w) \in N_0$, then by Lemma \ref{notprio}, we can reach a position in $P_1$.\\
$[I.1]$ If $x \oplus y \oplus z = 0$, then $(x,y,z) \in N_0$. By Lemma \ref{notop1}, we can move to a position in $P$.
\end{proof}

\section{Prospect for Future Research}
I have the following conjecture. If this conjecture is true, the variant of amalgamation Nim in this article has an elegant mathematical structure.

\begin{conjecture}
For $n \in \mathbb{Z}_{\ge 0}$, we have the following $(i)$ and $(ii)$.\\
$(i)$ If the Grundy number of a position $(x,y,z)$ is $2n$, then 
$x \oplus y \oplus z = 2n$ or $x \oplus y \oplus z = 2n+1$.\\
$(ii)$ If the Grundy number of a position $(x,y,z)$ is $2n+1$, then 
$x \oplus y \oplus z = 2n$ or $x \oplus y \oplus z = 2n+1$.
\end{conjecture}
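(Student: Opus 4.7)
The plan is to prove the conjecture by strong induction on $x+y+z$, recasting $(i)$ and $(ii)$ as the single statement $\lfloor g(x,y,z)/2\rfloor = \lfloor (x\oplus y\oplus z)/2\rfloor$. Set $s = x\oplus y\oplus z$ and $k = \lfloor s/2\rfloor$; the task is then to show that the mex of the Grundy values on $\textit{move}(x,y,z)$ lies in $\{2k,2k+1\}$. This splits into $(a)$ a lower bound --- every $v\in\{0,1,\dots,2k-1\}$ appears as the Grundy value of some move --- and $(b)$ an upper bound --- at least one of $\{2k,2k+1\}$ is missed by every move.

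For $(b)$, the inductive hypothesis forces any move whose Grundy value could lie in $\{2k,2k+1\}$ to land in a position with Nim-sum in $\{2k,2k+1\}$. I would therefore extend Definition \ref{defofpn} to every bucket: for each $k\ge 0$ introduce families $G^{(k)}_0$ and $G^{(k)}_1$ (bucket-$k$ analogues of $P_0$ and $P_1$), cut out by conditions of the form $x+y = z+c$ or $x+y > z+c$ with $c$ depending on $k$, and designed so that $G^{(k)}_\varepsilon$ consists exactly of the positions of Grundy value $2k+\varepsilon$. With closure statements analogous to Lemmas \ref{p1notp0}--\ref{p02notp1} in hand --- no move from $G^{(k)}_\varepsilon$ reaches another $G^{(k)}_\varepsilon$ position --- statement $(b)$ is immediate: the ambient $(x,y,z)$ itself lies in some $G^{(k)}_\varepsilon$, and that very value $2k+\varepsilon$ is therefore unreachable by any move.

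For $(a)$, for each $j<k$ and each $\varepsilon\in\{0,1\}$ I would exhibit a move from $(x,y,z)$ landing in $G^{(j)}_\varepsilon$. The tools are those of Lemmas \ref{notprio} and \ref{notop1}: Nim moves on each of the three coordinates exhaust the Nim-sums $\{0,\dots,s-1\}$ by the standard Nim argument, hitting every lower bucket $j<k$, and the parity of the resulting bucket-$j$ position can be adjusted either by choosing a different coordinate to move or by the $(v\pm 1,z\mp 1)$ bit-flipping trick used in Lemma \ref{notop1}, with amalgamation moves filling in any parity not covered this way provided the restriction $\min\ge 2$ holds.

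The main obstacle is pinning down the families $G^{(k)}_0, G^{(k)}_1$ for $k\ge 1$. In Definition \ref{defofpn} the split into $P_{0,1}, P_{0,2}, P_{1,1}, P_{1,2}, N_{0,1}, N_{0,2}$ uses the conditions $x+y = z$, $x+y = z+2$, $x+y > z+2$, all tailored to bucket $0$. For bucket $k\ge 1$ the natural replacements should involve $x+y-z$ taking a prescribed value in an interval around $2k$, with the $\pm 1$ perturbations in $P_{1,1}, P_{1,2}$ becoming $\pm 1$ perturbations of $2k$; but pinning down the exact case split, and re-checking for each of the six move types (\ref{movex})--(\ref{movemerge3}) that the transitions between the $G^{(k)}_\cdot$ families match the required Grundy arithmetic, is a case analysis substantially longer than all of Lemmas \ref{p1notp0}--\ref{notop1} combined. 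Once that bookkeeping is in place the induction closes, but I expect this to be where the bulk of the proof lies.
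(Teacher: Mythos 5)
This statement is posed as a \emph{conjecture} in the paper's final section and is not proved there, so there is no proof of record to compare yours against; the closest the paper comes is the Grundy-value-$0$ half of the $n=0$ case, since its main theorem identifies the $\mathcal{P}$-positions as $P=P_0\cup P_1$ and these all have Nim-sum $0$ or $1$. Your proposal is a reasonable plan of attack, but as written it is not a proof: the entire substance --- the explicit arithmetic description of the families $G^{(k)}_0,G^{(k)}_1$ for $k\ge 1$ and the verification of the closure and reachability statements for all six move types (\ref{movex})--(\ref{movemerge3}) --- is deferred, and you say yourself that this is where the bulk of the work lies. Until those families are written down and checked, nothing has been established.

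Two technical points in the outline also need repair. First, strong induction on $x+y+z$ does not cover amalgamation moves, which preserve the total number of stones; you need a well-founded measure such as the lexicographic pair consisting of the total number of stones and the number of nonempty piles. Second, step $(b)$ as stated is circular: you conclude that the value $2k+\varepsilon$ is missed because ``the ambient $(x,y,z)$ itself lies in some $G^{(k)}_\varepsilon$,'' but if $G^{(k)}_\varepsilon$ is by definition the set of positions of Grundy value $2k+\varepsilon$, then placing $(x,y,z)$ in it is precisely the conclusion you are trying to reach. To break the circle you must define $G^{(k)}_\varepsilon$ by explicit conditions (as Definition \ref{defofpn} does for the bottom bucket), prove that these sets cover every position whose Nim-sum lies in $\{2k,2k+1\}$, and only then run the closure and reachability arguments. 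The paper's own experience suggests this is far from routine: even the bottom bucket required the case split of Definition \ref{defofpn} and the full chain of Lemmas \ref{lemmaforxyz}--\ref{notop1}, and the parity corrections $z\mapsto z\pm 1$ used to build $P_{1,1}$ and $P_{1,2}$ have no announced analogue for higher buckets, where the Nim-sum $2k$ or $2k+1$ can differ from $x+y-z$ in several bit positions at once.
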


\vskip 20pt
\noindent {\bf Acknowledgement.}   I  extend my gratitude to Dr. Ryohei Miyadera for giving me instructions on the subject of this article.

\end{document}